\pgfplotsset{compat=newest}
\newtheorem{theorem}{Theorem}
\newtheorem{lemma}{Lemma}
\newcommand{\R}{\mathbb R}
\newcommand{\grad}{\operatorname{grad}}
\newcommand{\U}{\mathcal U}
\newcommand{\X}{\mathcal X}
\newcommand{\cA}{\mathcal A}
\newcommand{\dk}[1]{\frac{\partial #1}{\partial x_k}}
\newcommand{\dl}[1]{\frac{\partial #1}{\partial x_l}}
\newcommand{\dx}[1]{\frac{\partial #1}{\partial x}}
\title{The Affine Geometric Heat Flow and Motion Planning for Dynamic Systems}
\author{Shenyu Liu, Yinai Fan and Mohamed Ali Belabbas$^*$\thanks{$^{*}$Shenyu Liu, Yinai Fan  and Mohamed Ali Belabbas are with the Coordinated Science Laboratory, University of Illinois, Urbana-Champaign.
        {\tt\small sliu113,yfan17, belabbas@illinois.edu}}}
\begin{document}

\maketitle
\begin{abstract}                
We present a new method for motion planning for control systems. The method aims to provide a natural computational framework in which a broad class of motion planning problems can be cast; including problems with holonomic and non-holonomic constraints, drift dynamics, obstacle constraints and constraints on the magnitudes of the applied controls. The method, which finds its inspiration in recent work on the so-called geometric heat flows and curve shortening flows, 
relies on a hereby introduced partial differential equation, which we call the affine geometric heat flow, which evolves an arbitrary differentiable path joining initial to final state in configuration space to a path that meets the constraints imposed on the problem. From this path, controls to be applied on the system can be extracted. We provide conditions guaranteeing that the controls extracted will drive the system arbitrarily close to the desired final state, while meeting the imposed constraints 
and illustrate the method on three canonical examples.
\end{abstract}

%

\section{Introduction}

Given a control system \begin{equation}\label{eq:sys0} \dot x = f(x,u)\end{equation} evolving on a differentiable manifold $M$, and two points $x_i, x_f \in M$, the motion planning problem in time $T>0$ is to find a control $u^*(t)$ that {\it steers} the system from $x_i$ to $x_f$ in $T$ units of time, i.e. so that the solution $x^*(t)$ of Eq.~\eqref{eq:sys0} with $u=u^*$ and $x^*(0)=x_i$ yields $x^*(T)=x_f$.
Due to its ubiquity in control applications ranging from robotics to autonomous wheeled vehicles, motion planning has been widely studied~\cite{Laumond1998,lavalle_2006} and a host of methods have been developed.

One of the control papers in which the issue of motion planning for non-holonomic systems was clearly delineated is the seminal paper~\cite{Brockett1982}, where finding a subRiemannian geodesic can easily be seen as solving a non-holonomic motion planning problem. For a more recent survey of this line of work, we refer to the recent monograph~\cite{Jean2014}. A common approach to non-holonomic motion planning is to use sinusoidal control function to, roughly speaking, generate the ``Lie bracket'' directions. See for example~\cite{li2012nonholonomic}, and~\cite{WANG2019114} for a very recent work on how oscillations can be used for orientation control in $SO(3)$. This idea is also used in derivative-free optimization~\cite{durr2013lie, michalowsky2017distributed}.

The major difficulties that can arise in motion planning problems are: 1. the nonholonomic character of the dynamics, 2. the presence of a drift term, 3. the presence of constraints on the control inputs/states. From a theoretical point of view, the Chow-Rashevski theorem provides us with conditions under which a non-holonomic system is controllable~\cite{}, but in the latter two cases, no such conditions is known: the general case of controllability of non-linear systems with drift is still an open problem. Nevertheless, for some specific nonlinear systems with drift, path planning or control algorithms are given in \cite{Luca1995,611856,788533}.

While most of the methods mentioned above focused on addressing the first difficulty, we propose here an approach that can address all three. More precisely, we propose a new variational method for motion planning. The novelty of the work lies in an extension of the so-called {\it geometric heat flow}, see below or~\cite{curvebook2001} for its use in motion planning, to encompass dynamics with constraints and drift. We term the resulting flow the {\it affine-geometric heat flow}.  The method works by ``deforming'' an arbitrary path between $x_i$ and $x_f$ into an almost feasible trajectory for the system, from which we can extract the controls $u^*$ that drive the system from $x_i$ to $x_f$ approximately. Using a variational approach, we provide a proof of convergence of the method as well.

\section{Preliminaries}\label{sec:prelim}

\noindent {\bf Notation:} 
With a slight abuse of notation, we define for $G(x) \in \R^{n \times n}$, $f,g \in \R^n$ $$\left[ f\left(\dx{G}\right)g\right]:=\begin{pmatrix} f^\top\frac{\partial G}{\partial x_1}g \\ \vdots \\ f^\top \frac{\partial G}{\partial x_n} g\end{pmatrix} \in \R^{n};$$ i.e. $\left[ f\left(\dx{G}\right)g\right]$ is the vector in 
$\R^n$ with $i$th entry $f^\top \frac{\partial G}{\partial x_i} g$, where $\frac{\partial G}{\partial x_i}$ is the $n \times n$ matrix with $kl$ entry $\frac{\partial G_{kl}}{\partial x_i}$. 
We furthermore use the notation \begin{equation}\label{eq:not2} (f \cdot G):=\left(\sum_{l=1}^n f_l \frac{\partial G_{ij}}{\partial x_l}\right)_{ij} \in \R^{n \times n}.\end{equation}

\noindent{\bf Trajectory planning problem} Consider a control system evolving in $M=\R^n$. We refer to $M$ as the configuration space. Note that $M$ can more generally be a $C^2$-differentiable manifold. We consider here dynamics which is {\it affine in the control}, that is the system evolves according to
\begin{equation}\label{sys}
    \dot x=F_d(x)+F(x)u
\end{equation}
for each $x\in\R^n$, $F_d(x)$ is a vector representing the drift dynamics when in state $x$; the columns of $F(x)\in\R^{n\times m}$ are the admissible control directions. The method we propose can also be applied to the more general dynamics described in Eq.~\eqref{eq:sys0}, we briefly discuss how towards the end of the paper. We make the following {\it assumption}: 
\begin{description}
    \item[Assumption A]
both $F_d(x), F(x)$ are assumed to be at least $C^2$, Lipschitz with constants $L_1, L_2$ respectively, and we assume that $F(x)$ is of {\it constant} rank almost everywhere in $\R^n$.  
\end{description}
Note that these assumptions can be weakened at the expanse of longer analysis. We focus here on the  the case  $n\geq m$; that is on potentially {\it under-actuated} dynamics. 


Recall that $x_i, x_f \in M$ are the desired initial and final states respectively, and $T >0$ is a fixed time allowed to perform the motion.
The set of {\bf admissible controls} is $\U:=L_2([0,T]\to \R^m)$, that is square integrable functions defined over the interval $[0,T]$. We set $$\X:=\{x(\cdot)\in AC([0,T]\to\R^n):x(0)=x_i,x(T)=x_f\},$$ the space of {\it absolutely continuous} $\R^n$-valued functions with start- and end-values  $x_i$ and $x_f$ respectively. 

We call  any $x(\cdot)\in\X$ an {\bf admissible solution} if there exists $u \in \U$ so that the generalized derivative $\dot x(t)$ of $x(\cdot)$ satisfies~\eqref{sys}. Denote by $\X^*\subseteq\X$ the set of admissible solutions. The {\bf trajectory planning problem} (from $x_i$ to $x_f$ with time $T$) is {\bf feasible} if $\X^*\neq\emptyset$. All open sets in $\X$ are with respect to the natural norm $\Vert x\Vert_{AC}:=\int_0^T\left(|x(t)|+|\dot x(t)|\right)dt.$

We additionally introduce the space of {\it continuous controls} $\U':=C^0([0,T]\to\R^m)$, to which correspond differentiable trajectories $\X':=\{x(\cdot)\in C^1([0,T]\to\R^n):x(0)=x_i,x(T)=x_f\}$. It is well-known that $\U'$ is a dense subspace in $\U$ with respect to the $L_2$ norm and that $\X'$ is a dense subspace in $\X$ with respect to the $\Vert\cdot\Vert_{AC}$ norm. Working over $\X'$ instead of $\X$ allows us to ``smoothly deform" a curve, which we will explain later.





\section{A flow for motion planning}\label{sec:motion_planning}

We now briefly introduce the geometric framework we use to cast and solve the trajectory planning problem and then introduce the main object introduced in this work: the {\it affine geometric heat flow}. We refer to our earlier work~\cite{7963599} for a more detailed presentation and more examples about the general framework. We rely  on differentiable {\bf homotopies} of curves (i.e., differentiable ``deformations'' of a curve), where the variable $s$ is the homotopy parameter; precisely, we use $x(t,s): [0,T]\times[0,s_{\max})$ where for {\it each $s$ fixed}, $x(\cdot,s) \in \X'$. 


Let $G(x)$ be a positive definite matrix defined on $M$, which we refer to as the Riemannian {\bf metric}. We denote by $\nabla$ the {\bf Levi-Civita} connection of $G(x)$~\cite{Jost2011}, and by $\nabla_{f}g$ the covariant derivative of the vector field $g$ along the vector field $f$. Recall that if $a(t) = \sum_{k=1}^n a_k(t) e^k$, where $a_i(t)$ are real numbers and $\|e^k\}$ basis vectors,  is a vector field along a curve $x(t)$, and $g$ is a vector field defined in a neighborhood of $x(t)$, then $\nabla_{a}g := \frac{da}{dt} + \sum_{i,j,k} \Gamma_{ij}^k a_ig_j e^k.$ 

The so-called {\bf geometric heat flow (GHF)}  is a parabolic partial differential equation, which evolves a curve with fixed end-pints toward a curve of minimal length: namely, given a Riemannian metric and an associated Levi-Civita connection, the GHF is the PDE 
\begin{equation}\label{eq:ghf}
    \frac{\partial x(t,s)}{\partial s} = \nabla_{\dot x(t,s)}\dot x(t,s).
\end{equation} where $\dot x(t,s):=\frac{\partial x(t,s)}{\partial t}$ and $x(0,s)=x_i, x(T,s)=x_f$ are fixed.
We refer the reader to~\cite{Jost2011} for a proof that this PDE yields a curve of minimal length. For applications of this flow to motion planning problems, and some illustrations of its solutions, we refer~\cite{7963599}

Taking inspiration from the GHF, we introduce here what we term the {\bf affine geometric heat flow (AGHF)}:
\begin{equation}\label{eq:aghf}
    \frac{\partial x(t,s)}{\partial s} = \nabla_{\dot x(t,s)}\left(\dot x(t,s)-F_d\right)+r(t,s)
\end{equation} 
where \begin{small}$$r(t,s)= G^{-1}\left( \left(\frac{\partial F_d}{\partial x}\right)^\top G (\dot x-F_d) + \frac{1}{2}\left[(\dot x -F_d) \left(\dx{G}\right)F_d\right]\right)$$ \end{small}and $F_d= F_d(x(t,s))$, $G=G(x(t,s))$, etc.  The first term is the covariant derivative of $\dot x(t,s)-F_d(x)$ in the direction $\dot x(t,s)$: it behaves similarly to the GHF, but takes into account the existence of a ``drift'' vector field $F_d(x)$. The second term, $r(t,s)$, can be interpreted as follows: denote by $\grad k (x,a(t))$ the gradient vector field, for the metric $G$, of the parametric function $k(x,a(t)) = \left< a(t)-F_d(x), F_d(x)\right>$, where the inner product $\left<\cdot,\cdot\right>$ is given by $G$ as well, and $a(t)$ is considered a parameter. Then we can show that $r(t,s) = \grad k(x(t,s),\dot x(t,s))$. One can justify the use of $r(t,s)$ as follows: it ``deforms'' $x(t,s)$ in the direction that aligns $\dot x$ with the drift vector field $F_d(x)$, thus rendering the path admissible for said drift. Due to space constraint, we postpone a lengthier justification for the form of the AGHF, but Lemma~\ref{lem:decreasing V} below provides a proof of convergence and additional insights, and we furthermore show in Sec.~\ref{sec:examples} that it does indeed converge to admissible paths on various examples. Note that when $F_d = 0$, the AGHF reduces to the GHF.

We say a solution $x^*(t)$ is a {\bf steady state} solution of~\eqref{eq:aghf} if its RHS is $0$ almost everywhere when evaluated on $x^*(t)$.

Note that when $F_d=0$, then $r(t,s) \equiv 0$ and the AGHF reduces to the GHF. Some intuition about the form of this PDE can be gathered from Lemma~\ref{lem:decreasing V} below.
The AGHF is a system of {\it parabolic PDEs}, and to solve it we need two {\bf boundary} conditions
\begin{equation}\label{bc}
    x(0,s)=x_i,x(T,s)=x_f\quad\forall s\geq 0
\end{equation}
and an {\bf initial} condition
\begin{equation}\label{ic}
    x(t,0)=v(t),\quad t\in[0,T]
\end{equation}
for some $v(\cdot)\in\X'$.

\noindent{\bf The method.}  We first describe the steps of the method we propose to perform trajectory planning for systems \eqref{sys} and provide below some converge guarantees. The method can be summarized through the following four steps. We are given $F_d(x)\in \R^n$, $F(x) \in \R^{n \times n}$, $x_i \in \R^n$ and $x_f \in \R^n$.
\begin{description}
    \item[Step 1:] Find a {\it bounded} $n\times (n-m)$ $x$-dependent matrix $F_c(x)$, differentiable in $x$, such that 
    \begin{equation}\label{def:bar_F}
    \bar F(x):=\begin{pmatrix}F_c(x)|F(x)
    \end{pmatrix} \in \R^{n \times n}
    \end{equation}
    is invertible for all $x\in\R^n$. The matrix $F_c(x)$ can be obtained using, e.g., the Gram-Schmidt procedure.
    
    \item[Step 2:] Evaluate 
    \begin{equation}\label{def:G}
    G(x):=(\bar F(x)^{-1})^\top D\bar F(x)^{-1}
    \end{equation}
    where $D:=\mbox{diag}(\underbrace{\lambda,\cdots,\lambda}_{n-m},\underbrace{1,\cdots,1}_{m})$ for some large $\lambda>0$ (we discuss below what large means in this context).

    \item[Step 3:]  
    Solve the AGHF~\eqref{eq:aghf} with boundary conditions~\eqref{bc} and initial condition~\eqref{ic}. Denote the solution by $x(t,s)$;

    \item[Step 4:] Evaluate
   {\small \begin{equation}\label{control_extraction}
         u(t):=\begin{pmatrix}
        0&I_{m\times m}
        \end{pmatrix}\bar F(x(t,s_{\max}))^{-1}(\dot x(t,s_{\max})-F_d(x(t,s_{\max}))
   \end{equation} }
\end{description}

{\bf Output:} The control $u(t)$ obtained in~\eqref{control_extraction} yields, when integrating~\eqref{sys}, a trajectory $\tilde x(t)$, which is our solution to the trajectory planning problem. We call it {\bf integrated path}.

It is observed that $F_c$ does not need to depend on the drift $F_d$ and there is no orthogonality requirement between $F$ and $F_c$ either, which gives much freedom in the construction of $F_c$ and hence in many cases the form of $\bar F=(F_c|F)$ and consequently $G$ are quite simple. We show how to apply the method to incorporate constraints on the controls in Sec.~\ref{sec:constraints} and we will provide examples of the application of the method in Sec.~\ref{sec:examples}. 



\subsection{On the convergence of the AGHF.}

The main new ingredients our method proposes is the definition of the Riemannian inner product in Step 2 above, and the definition of the AGHF. Roughly speaking, for this inner product, short paths are admissible paths, and we refer to our earlier work~\cite{LBMotionSktechArxiv2019} for a longer justification of the use of the inner product defined. 

The AGHF is a nonlinear set of PDEs, and thus the existence of a solution is not guaranteed a priori even for short time. We provide here an analysis and convergence guarantees.

To this end, define  the Lagrangian $L$ by 
    \begin{equation}\label{def:L}
    L(x,\dot x)=\frac{1}{2}(\dot x-F_d(x))^\top G(x)(\dot x-F_d(x))
    \end{equation} 
Given $L$,  the {\it action functional} is defined on $\X$ as 
\begin{equation}\label{def:V}
    \cA(x(\cdot)):=\int_0^TL(x(t),\dot x(t))dt.
\end{equation}

\begin{lemma}\label{lem:decreasing V}
Let $x^*(t)$ be a steady-state solution of the AGHF~\eqref{eq:aghf}. Then $x^*(t)$ is an extremal curve for $\cA$ in \eqref{def:V}. Furthermore, $\cA$ decreases along the solutions of the AGHF; i.e. if $x(t,s)$ is such a solution, then $\frac{d}{ds}\cA(x(\cdot,s)) \leq 0$, and equality holds only if $x(\cdot,s)$ is an extremal curve for $\cA$.
\end{lemma}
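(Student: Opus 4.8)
The plan is to recognize the AGHF as the negative gradient flow of the action $\cA$ with respect to the $L_2$-inner product weighted by the metric $G$, and then to read off both assertions from a single energy identity. The whole lemma hinges on one pointwise algebraic fact: the right-hand side of \eqref{eq:aghf} equals $-G^{-1}E(x)$, where $E(x):=\dx{L}-\frac{d}{dt}\frac{\partial L}{\partial \dot x}$ is the Euler--Lagrange expression of the Lagrangian \eqref{def:L}.

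First I would compute the first variation. Fixing the homotopy and writing $x_s:=\frac{\partial x}{\partial s}$, and using $\frac{\partial \dot x}{\partial s}=\frac{\partial x_s}{\partial t}$, I get
\[
\frac{d}{ds}\cA(x(\cdot,s))=\int_0^T\Big(\dx{L}\cdot x_s+\frac{\partial L}{\partial \dot x}\cdot\frac{\partial x_s}{\partial t}\Big)\,dt .
\]
Integrating the second term by parts in $t$ and using that the boundary conditions \eqref{bc} hold for every $s$, so that $x_s(0,s)=x_s(T,s)=0$, the boundary contribution vanishes and
\[
\frac{d}{ds}\cA(x(\cdot,s))=\int_0^T E(x)^\top x_s\,dt .
\]
This already reduces everything to understanding $x_s$, i.e. the AGHF right-hand side, in terms of $E(x)$.

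The crux is the identity $\nabla_{\dot x}(\dot x-F_d)+r=-G^{-1}E(x)$. I would verify it in coordinates. On one side, I expand the covariant derivative through the Christoffel symbols $\Gamma^k_{ij}=\tfrac12 G^{kl}(\partial_iG_{lj}+\partial_jG_{li}-\partial_lG_{ij})$ of the Levi--Civita connection, using $\dot G=(\dot x\cdot G)$ and $\dot F_d=\dx{F_d}\,\dot x$. On the other side I compute directly
\[
E(x)=-\Big(\dx{F_d}\Big)^{\!\top}\! G(\dot x-F_d)+\tfrac12\big[(\dot x-F_d)(\dx{G})(\dot x-F_d)\big]-\dot G(\dot x-F_d)-G\tfrac{d}{dt}(\dot x-F_d).
\]
Matching terms, the covariant-derivative piece reproduces the ``kinetic/geodesic'' part of $-G^{-1}E(x)$ exactly as in the drift-free GHF, and the remaining drift-dependent terms — those arising from $\dx{F_d}$ and from the coupling of $\dot x-F_d$ with $F_d$ through $\dx{G}$ — must be cancelled precisely by $r(t,s)$. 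This term-by-term cancellation, carried out with careful index bookkeeping and repeated use of the symmetry of $G$ (hence of each $\frac{\partial G}{\partial x_i}$), is the main obstacle; it is exactly the step in which the specific form of $r$, rather than the covariant derivative alone, is indispensable, and it is where the delicate drift--metric cross terms have to be shown to drop out.

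Granting the identity, both claims are immediate. Substituting it into the first-variation formula gives
\[
\frac{d}{ds}\cA(x(\cdot,s))=-\int_0^T E(x)^\top G^{-1}E(x)\,dt\le 0,
\]
since $G\succ0$ forces $G^{-1}\succ0$ and hence a nonnegative integrand; equality holds iff $E(x(\cdot,s))\equiv0$ a.e., i.e. iff $x(\cdot,s)$ solves the Euler--Lagrange equation and is an extremal of $\cA$, which is the equality clause. Finally, if $x^*$ is a steady-state solution then the AGHF right-hand side vanishes a.e., so $-G^{-1}E(x^*)=0$, and invertibility of $G$ yields $E(x^*)=0$ a.e.; this is precisely the statement that $x^*$ is an extremal curve for $\cA$, proving the first assertion.
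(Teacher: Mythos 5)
Your proposal is correct and takes essentially the same route as the paper: the paper likewise reduces everything to the identity that the AGHF right-hand side equals $G^{-1}\bigl(\tfrac{d}{dt}\tfrac{\partial L}{\partial \dot x}-\tfrac{\partial L}{\partial x}\bigr)$ (leaving the coordinate verification as an omitted "algebraic computation," exactly the step you flag as the crux), and then concludes via the first variation with vanishing boundary terms and positive definiteness of $G$. Your decrease rate $-\int_0^T E^\top G^{-1}E\,dt$ is the same quantity the paper writes as $-\int_0^T \frac{\partial x}{\partial s}^{\top} G\,\frac{\partial x}{\partial s}\,dt$, since $E=-G\,\frac{\partial x}{\partial s}$ along the flow.
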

\begin{proof}
Due to space constraints, we only sketch the proof. We first show that a steady-state solution of the AGHF satisfies the Euler-Lagrange equation of $\cA$ and hence it is an extremal curve. To this end, write $f:=F_d$ and $L=\frac{1}{2} (\dot x_i - f_i) G_{ij}(\dot x_j-f_j)$, where we adopt the Einstein convention of summing over repeated indices. 
We have
\begin{align*}
    \frac{\partial L}{\partial x_k} &= -\dk{f_i} G_{ij}(\dot x_j -f_j) + \frac{1}{2} (\dot x_i -f_i) \dk{G_{ij}} (\dot x_j - f_j),\\
     \frac{\partial L}{\partial \dot x_k} &=(\dot x_i-f_i) G_{ik},\\
     \frac{d}{dt}  \frac{\partial L}{\partial \dot x_k} &= (\ddot x_i -\dl{f_i} \dot x_l )G_{ik}+(\dot x_i-f_i) \dl{G_{ik}}\dot x_l.
\end{align*}

The Euler-Lagrange equations can be written, after some rearrangments and using the notation introduced in Sec.~\ref{sec:prelim}, as
\begin{multline}\label{eq:EL1}
0=\frac{d}{dt}  \frac{\partial L}{\partial \dot x}-   \frac{\partial L}{\partial x} =( \dot x \cdot G)
(\dot x - f) + G (\ddot x -\frac{\partial f}{\partial x} \dot x)\\  + \left(\frac{\partial f}{\partial x}\right)^\top G (\dot x -f)-\frac{1}{2} \left[ (\dot x -f) \left(\dx{G}\right) (\dot x-f)\right]
\end{multline}
After some algebraic computations, which we omit here due to space constraints, we obtain that \eqref{eq:aghf} is equivalent to the following PDE: 
\begin{equation}\label{eqn:EL2}
    \frac{\partial x}{\partial s}=G^{-1}\left(\frac{d}{dt}\frac{\partial L}{\partial \dot x}-\frac{\partial L}{\partial x}\right),
\end{equation}
which completes the first part of the proof. Next, apply first order variation approximation on \eqref{def:V} and integration by parts, we have 
\begin{equation*}
    \cA(x(\cdot,s+\delta))=\cA(x(\cdot,s))+\delta\left(\left.\frac{\partial x(t,s)}{\partial s}^\top\frac{\partial L}{\partial \dot x}\right|_0^T\right.
    +\left.\int_0^T\frac{\partial x(t,s)}{\partial s}^\top\left(\frac{\partial L}{\partial x}-\frac{d}{dt}\frac{\partial L}{\partial \dot x}\right)dt\right)+o(\delta).
\end{equation*}
The evaluated term $\left.\frac{\partial x}{\partial s}^\top\frac{\partial L}{\partial \dot t}\right|_0^T$ vanishes because of our boundary conditions \eqref{bc}. Taking the limit $\delta\to 0$ and plug \eqref{eqn:EL2} in,
\begin{align*}
\frac{d \cA(x(\cdot,s))}{d s}&=\lim_{\delta\to 0}\frac{V(x(\cdot,s+\delta))-V(x(\cdot,s))}{\delta}\\
&=\int_0^T\frac{\partial x(t,s)}{\partial s}^\top\left(\frac{\partial L}{\partial x}-\frac{d}{dt}\frac{\partial L}{\partial x_t}\right)dt\\
&=-\int_0^T \frac{\partial x(t,s)}{\partial s}^\top G(x) \frac{\partial x(t,s)}{\partial s} dt
\end{align*}
By definition~\eqref{def:G}, $G$ is positive definite so $\frac{d \cA(x(\cdot,s))}{d s}\leq 0$ and equality holds if and only if $\frac{\partial x}{\partial s}=0$ almost everywhere, i.e., $x(\cdot,s)$ is an extremal curve for $\cA$.
\end{proof}

\subsection{On convergence guarantees for motion planning}

We now show that the control extracted in Step 4 of the method will drive the system arbitrarily close to the desired final state, {\it provided} that a solution to the motion planning problem exist (which, as we mentioned earlier, is in general an open problem for systems with drift), that the trajectory with which we initialize the system is well-chosen. We will see in the examples of Sec.~\ref{sec:examples}, that an arbitrary choice of initial condition very often yields a convergent solution.

\begin{theorem}
\label{thm:main}
Consider the system \eqref{sys} and let $x_i, x_f \in \R^n$. Assume that the motion planning problem from $x_i$ to $x_f$ is feasible (i.e. $\X^*$ is non-empty) and that Assumption A above is met. Then there exists $C>0$ such that for any $\lambda>0$, there exists an open set $\Omega_\lambda\subseteq\X'$ (with respect to $\Vert\cdot\Vert_{AC}$) so that as long as the initial curve $v\in\Omega_\lambda$, The integrated path $\tilde x(t)$ from our algorithm with sufficiently large $s_{\max}$ has the property that 
\begin{equation}\label{endpoint_convergence}
|\tilde x(T)-x_f|\leq \sqrt{\frac{3TMC}{\lambda}}\exp{\left(\frac{3T}{2}(L_2^2T+L_1^2C)\right)}.
\end{equation}
\end{theorem}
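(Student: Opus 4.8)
The plan is to reduce the endpoint error to a Grönwall estimate driven by a quantity that the action $\cA$ controls, and then to use Lemma~\ref{lem:decreasing V} to force that quantity to be of order $1/\lambda$. First observe that the boundary conditions \eqref{bc} give $x(T,s_{\max})=x_f$, so writing $x(t):=x(t,s_{\max})$ and letting $\tilde x$ be the integrated path produced by \eqref{control_extraction}, the target error is exactly $|\tilde x(T)-x_f|=|x(T)-\tilde x(T)|$. Set $y(t):=\bar F(x(t))^{-1}\bigl(\dot x(t)-F_d(x(t))\bigr)$ and split $y=(y_c,y_u)$ into its first $n-m$ and last $m$ entries; by \eqref{control_extraction} the extracted control is $u=y_u$, and by construction $\dot x=F_d(x)+F_c(x)y_c+F(x)u$. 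Thus $x$ is an exact trajectory of \eqref{sys} up to the spurious term $F_c(x)y_c$, whereas $\tilde x$ solves $\dot{\tilde x}=F_d(\tilde x)+F(\tilde x)u$ with $\tilde x(0)=x(0)=x_i$. Hence the error $e:=x-\tilde x$ satisfies $e(0)=0$ and $\dot e=\bigl(F_d(x)-F_d(\tilde x)\bigr)+\bigl(F(x)-F(\tilde x)\bigr)u+F_c(x)y_c$.

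The second ingredient is that $\cA$ penalizes precisely the spurious component $y_c$ by a factor $\lambda$. Indeed, the defining identity \eqref{def:G} yields $\bar F^\top G\bar F=D$, so that $L(x,\dot x)=\tfrac12 y^\top D y=\tfrac12\bigl(\lambda|y_c|^2+|y_u|^2\bigr)$ and therefore $\cA(x(\cdot))=\tfrac12\int_0^T\bigl(\lambda|y_c|^2+|u|^2\bigr)\,dt$. Because the problem is feasible there is an admissible $x^*\in\X^*$, for which $y_c\equiv 0$ and $\cA$ reduces to $\tfrac12\int_0^T|u^*|^2\,dt$, a quantity independent of $\lambda$ which I would use to fix $C$. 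Since $\X'$ is dense in $\X$, for each $\lambda$ I can pick a $C^1$ curve $v$ approximating $x^*$ closely enough that its spurious component obeys $\int_0^T|y_c(v)|^2\,dt=O(1/\lambda)$, so that $\cA(v)\le C$ uniformly in $\lambda$; $\Omega_\lambda$ is then a $\|\cdot\|_{AC}$-neighbourhood of $v$ on which $\cA\le C$ still holds, and it must shrink as $\lambda\to\infty$. By Lemma~\ref{lem:decreasing V}, $\cA$ is non-increasing along the AGHF, so $\cA(x(\cdot,s_{\max}))\le\cA(v)\le C$ for every $s_{\max}$. Reading off the two pieces of this bound gives the two estimates that drive the proof: $\int_0^T|y_c|^2\,dt\le 2C/\lambda$ and $\int_0^T|u|^2\,dt\le 2C$.

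It then remains to close a Grönwall estimate. Using $|a+b+c|^2\le 3\bigl(|a|^2+|b|^2+|c|^2\bigr)$, the Lipschitz bounds $L_1,L_2$ on $F_d,F$, and a uniform bound $M$ on $|F_c|^2$ (available since $F_c$ is bounded by Step~1), I would estimate $|\dot e|^2\le 3\bigl(L_1^2+L_2^2|u|^2\bigr)|e|^2+3M|y_c|^2$; then $|e(t)|^2\le T\int_0^t|\dot e|^2\,d\tau$ together with the integral form of Grönwall's inequality gives $|e(T)|^2\lesssim \bigl(TM\textstyle\int_0^T|y_c|^2\bigr)\exp\!\bigl(3T(L_1^2T+L_2^2\int_0^T|u|^2)\bigr)$. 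Substituting $\int_0^T|y_c|^2\le 2C/\lambda$ and $\int_0^T|u|^2\le 2C$ and taking square roots produces a bound of exactly the shape \eqref{endpoint_convergence}: the $1/\sqrt\lambda$ decay comes from the action penalty and the exponential from the Lipschitz constants, with the precise pairing of $T$ versus $C$ against $L_1^2,L_2^2$ being a matter of careful bookkeeping of which term carries the control factor.

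The Grönwall step is routine; the two delicate points, where I expect the real effort, are as follows. First, the construction of $\Omega_\lambda$: since $\cA$ is quadratic in $\dot v$ while $\|\cdot\|_{AC}$ controls $\dot v$ only in $L^1$, one must show with care that a genuine $\|\cdot\|_{AC}$-open set on which $\cA\le C$ exists and explain why its size must degrade as $\lambda\to\infty$. Second, all estimates at $s_{\max}$ presuppose that the AGHF \eqref{eq:aghf}, a quasilinear parabolic system, admits a solution on $[0,T]\times[0,s_{\max}]$ with the prescribed data; the a~priori action bound from Lemma~\ref{lem:decreasing V} is the natural input to a continuation argument, but global-in-$s$ existence is the genuine analytic obstacle hidden behind the phrase ``sufficiently large $s_{\max}$''.
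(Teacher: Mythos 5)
Your proposal follows essentially the same route as the paper's proof: fix $C$ from the ($\lambda$-independent) action of a feasible trajectory, use Lemma~\ref{lem:decreasing V} to propagate $\cA\le C$ to $s_{\max}$, read off $\int_0^T|u_c|^2\,dt=O(C/\lambda)$ and $\int_0^T|u|^2\,dt=O(C)$ from $\bar F^\top G\bar F=D$, and close with the identical Cauchy--Schwarz / power-mean / Gr\"onwall estimate on the error ODE $\dot e=(F_d(x)-F_d(\tilde x))+(F(x)-F(\tilde x))u+F_c(x)u_c$. The discrepancies are cosmetic or inherited from the paper itself: your retained factor $\tfrac12$ is absorbed by rescaling $C$, your $L_1\leftrightarrow L_2$ pairing reflects an inconsistency between the paper's Assumption~A and \eqref{endpoint_convergence}, your neighborhood-of-$v$ construction of $\Omega_\lambda$ is a minor variant of the paper's sublevel-set-plus-region-of-attraction construction, and the two delicate points you flag (openness of $\{\cA<C\}$ in $\Vert\cdot\Vert_{AC}$ and global-in-$s$ existence of the AGHF solution) are glossed over by the paper as well, which simply asserts continuity of $\cA$ in the AC norm and postulates a sufficiently large $s_{\max}$.
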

We see that as $\lambda \to \infty$, we drive the system closer to the desired final state $x_f$.


\begin{proof}
Since the path planning problem is feasible, there exists $x^*(\cdot)\in\X^*$ so we have $\dot x^*=F_d(x^*)+F(x^*)u^*$ for some $u^*(\cdot)\in \U$. Plug this $x^*$ into \eqref{def:L} and we have $L(x^*(t),\dot x^*(t))=|u^*(t)|^2$. Pick $C>\cA(x^*)=\int_0^T|u^*(t)|^2dt$. Note $C$ is independent of $\lambda$. Denote $\Omega_\lambda^{\text{AC}}:=\{x\in\X:V(x)<C\}$. $\Omega_\lambda^{\text{AC}}$ is not empty because it at least contains $x^*$; in addition, because $\cA$ is continuous over $\X$ with respect to $\Vert\cdot\Vert_{AC}$, $\Omega_\lambda^{\text{AC}}$ is open. Since $\X'$ is dense in $\X$, $\Omega_{\lambda}':=\Omega_\lambda^{\text{AC}}\cap \X'$ is open as well. From Lemma~\ref{lem:decreasing V} we know that $\cA(x(\cdot,s))$ is non-increasing so $\Omega_\lambda'$ is invariant. Let $\Omega_\lambda$ be the region of attraction to $\Omega_\lambda'$; that is, $\Omega_\lambda'\subset\Omega_\lambda\subset \X'$ and all AGHF solutions $x(\cdot,s)$ derived from \eqref{eq:aghf} with any initial condition $v\in\Omega_\lambda$ will converge to the invariant set $\Omega_\lambda'$ when $s$ increases. Consequently when $s_{\max}$ is sufficiently large, $\cA(x(\cdot,s_{\max}))\leq C$.

Define the curve $x(t):=x(t,s_{\max})$ and for each $t\in[0,T]$ let $u(t)\in\R^m,u_c(t)\in \R^{n-m}$ be given by
\begin{equation}\label{eqn:full_u}
\begin{pmatrix}
u_c(t)\\u(t)
\end{pmatrix}=\bar F(x(t))^{-1}(\dot x(t)-F_d(x(t))).
\end{equation}

Plug it into \eqref{def:L} and we have
\[
C\geq \cA(x)=\int_0^TL(x(t),\dot x(t))dt=\int_0^T|u(t)|^2+\lambda|u_c(t)|^2dt.
\]
\begin{equation}\label{inequality_1}
    \Rightarrow \int_0^T|u(t)|^2dt\leq C,\quad\int_0^T|u_c(t)|^2dt\leq \frac{C}{\lambda}.
\end{equation}

Comparing \eqref{eqn:full_u} with \eqref{control_extraction}, we see that the extracted control is exactly $u$; in other words, the integrated path is given by 
\[
\tilde x(0)=x_i,\quad \dot{\tilde x}=F_d(\tilde x)+F(\tilde x)u.
\]
Define the error $e(t):=x(t)-\tilde x(t)$. Then
\[
e(0)=0,\ \dot e=(F_d(x)-F_d(\tilde x))+(F(x)-F(\tilde x))u+F_c(x)u_c.
\]
Therefore we have
\begin{equation*}
e(t)=\int_0^t(F_d(x(\tau))-F_d(\tilde x(\tau)))+(F(x(\tau))-F(\tilde x(\tau)))u(\tau)+F_c(x(\tau))u_c(\tau)d\tau 
\end{equation*}
Square the norm of $e(t)$ and apply Cauchy-Schwartz inequality,
\begin{equation*}
|e(t)|^2\leq t\int_0^t(|F_d(x(\tau))-F_d(\tilde x(\tau))|
 +|(F(x(\tau))-F(\tilde x(\tau)))u(\tau)|+|F_c(x(\tau))u_c(\tau)|)^2d\tau
\end{equation*}
Use power mean inequality, the square of the sum of 3 terms inside integral is no larger than 3 times the sum of the square of each individuals,
\begin{equation*}
|e(t)|^2\leq 3t\int_0^t|F_d(x(\tau))-F_d(\tilde x(\tau))|^2
+|(F(x(\tau))-F(\tilde x(\tau)))u(\tau)|^2+|F_c(x(\tau))u_c(\tau)|^2d\tau
\end{equation*}
Owing to the fact that $ F_d,  F$ are globally Lipschitz and $ F_c $ is globally bounded, we conclude that
\begin{small}\begin{align*}
|e(t)|^2\leq 3t\int_0^t(L_2^2+L_1^2|u(\tau)|^2)|e(\tau)|^2d\tau+3t\int_0^tM|u_c(\tau)|^2d\tau
\end{align*}\end{small}
Next, by Gr\"onwall inequality and the fact that $3t\int_0^t(L_2^2+L_1^2|u(\tau)|^2)|e(\tau)|^2d\tau$ is a non-decreasing function of $t$,
\begin{small}\[
|e(t)|^2\leq \left(3t\int_0^tM|u_c(\tau)|^2d\tau\right)\exp\left(3t\int_0^t(L_2^2+L_1^2|u(\tau)|^2)d\tau\right)\\
\]\end{small}
Finally, substitute in the inequalities from \eqref{inequality_1}, we conclude that $|e(t)|^2\leq \frac{3tMC}{\lambda}\exp{\left(3t(L_2^2t+L_1^2C)\right)}$. Thus $|\tilde x(T)-x_f|=|e(T)|\leq \sqrt{\frac{3TMC}{\lambda}}\exp{\left(\frac{3T}{2}(L_2^2T+L_1^2C)\right)}$ as was to be proved.\qed
\end{proof}

\section{Beyond affine control systems}\label{sec:constraints}

As we  already discussed in our earlier work~\cite{LBMotionSktechArxiv2019}, constraints in states can be handled by multiplying the inner product matrix $G(x)$ by an appropriately defined barrier function $b(x)$. Now we show that being able to handle constraints and drift allows us to handle input constraints and non-affine in the control dynamics, i.e., systems of the type
\begin{equation}\label{sys:nonlinearwithconstraints}
\begin{aligned}\dot x&=f(x,u)\\
 l(x,u(t))&\geq 0,\quad\forall t\in[0,T],
 \end{aligned}
\end{equation} with $l$ a differentiable function, at the expanse of using controls that are differentiable almost everywhere. 
For example, common constraints such as magnitude bounds on the controls $|u| \leq u^{\max}$, for a given $u^{\max}$,   can be implemented  by setting $l(u)=u^{\max}-|u|\geq 0$. Define $\dot u=v$. Denote the {\bf augmented state} $y=\begin{pmatrix}x\\u\end{pmatrix}\in\R^{n+m}$, then
\begin{equation}\label{sys:aug}
    \dot{y}=\begin{pmatrix}
    \dot x\\\dot u
    \end{pmatrix}=\underbrace{\begin{pmatrix}f(x,u)\\0
    \end{pmatrix}}_{F_d}+\underbrace{\begin{pmatrix}0\\I_{m\times m}
    \end{pmatrix}}_{F}v,
\end{equation}
which is a system with affine control and drift, same as \eqref{sys}. To implement the constraints in $u$, we pick the barrier function $b(y):=1+\frac{1}{l(x,u)}$ and multiply it to $G$ in \eqref{def:G}:
\[
G(y):=b(y)(\bar F(y)^{-1})^\top D\bar F(y)^{-1}
\]
Notice that if we pick $F_c=\begin{pmatrix} 0&I_{n\times n}\end{pmatrix}^\top$, we will simply have $\bar F=(F_c|F)=I_{(n+m)\times(n+m)}$. Thus $G(y)=b(y)D$. In addition, \eqref{def:L} gives
\begin{align*}
L( y,\dot{y})&=b(\bar y)(\dot y-F_d(y))^\top G(y)(\dot y-F_d(y))\\
&=b(\bar y)(\lambda|\dot x-f(x,u)|^2+|\dot u|^2)
\end{align*}
We can argue that the AGHF will yield an admissible control meeting the constraints as follows:  when a curve $y(t)$ is  close to the boundary of obstacles defined by the $l$, $l$ is close to $0$ and thus $b(y)$ and $L$ are large. Since the AGHF minimizes $\cA=\int_0^TLdt$, the curve will be deformed away from the obstacles and hence $u$ will meet the input constraints. During the implementation of this algorithm in Step 3,  since $u_i$'s are now states of the augmented system, initial conditions and boundary conditions need to be provided.


\section{Applications and examples}\label{sec:examples}
We illustrate our method on three canonical motion planning problems, showcasing how it handles the different issues that can arise. The basic system we consider is a unicycle rolling on the plane without slipping, as depicted in Fig.~\ref{fig:unicycle}. The kinematics is given by the differential equations
\begin{equation}\label{eqn:unicycle}
\underbrace{\begin{pmatrix}
\dot{q}_x \\ \dot{q}_y \\ \dot{\theta} 
\end{pmatrix}}_{\dot{x}} = 
\underbrace{\begin{pmatrix}
\cos{\theta} \\ \sin{\theta} \\ 0
\end{pmatrix}}_{f_1} u_1+ 
\underbrace{\begin{pmatrix}
0 \\ 0 \\ 1
\end{pmatrix}}_{f_2} u_2. 
\end{equation}
We refer to our earlier work~\cite{7963599} for basic motion planning tasks for this system.

\begin{figure}
           \centering
             	\tdplotsetmaincoords{60}{100}
 \begin{tikzpicture}[tdplot_main_coords,scale=1]
 \def\cossixty{0.5}
\def\sinsixty{0.866025}
\draw[fill=gray!40] (0,0,0) -- (4,0,0) --(4,4,0) --(0,4,0) -- cycle;
 \draw[thick,->] (0,0,0) -- (1,0,0) node[anchor=north east]{$x$};
\draw[thick,->] (0,0,0) -- (0,1,0) node[anchor=north west]{$y$};
		
	\draw[thick,black, ->] (1,1,0) -- (1,1.5,0) node[anchor=north east]{};
\draw[thick,black, ->] (1,1,0) -- (1.5,1,0) node[anchor=north east]{};
\tdplotsetrotatedcoords{-35}{90}{0}
\coordinate (Shift) at (1,1,1);
    \tdplotsetrotatedcoordsorigin{(Shift)}
	\tdplotdrawarc[tdplot_rotated_coords, very thick, fill=blue!30]{(0,0,0)}{1} {0}{360}{}{};

\draw[ dotted, black,tdplot_rotated_coords] (1,0,0) -- (1,2,0) node[anchor=west]{};
\draw[ dotted, black, ] (1,1,0) -- (4,1,0) node[anchor=east]{};
\tdplotdrawarc[ thick, red,->]{(1,1,0)}{1}{0}{56}{anchor=south}{$\theta$};

\node[circle,fill=black,,minimum size=1.5pt,inner sep=2pt,label=below left:{{\small $(q_x,q_y)$}}] at (1,1,0) (cp) {};
\end{tikzpicture}

\caption{\small The unicycly is a 3dof system with configuration variables $(q_x,q_y)$, describing the position of the center of the wheel,  and  $\theta$ describing the orientation of the wheel with respect to the $x$-axis.}
\label{fig:unicycle}
\end{figure}
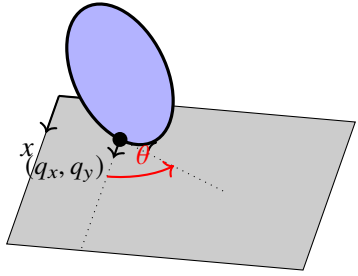


\subsection{Unicycle of constant linear velocity}
We first consider the model of a planar unicycle with unit {\it constant linear velocity}. In this case we have $u_1\equiv 1$ and hence \eqref{eqn:unicycle} becomes $\dot x=f_1+f_2u$, where $f_1$ is now the drift. In other words, even when $u=0$, $\dot x\neq 0$. The control $u$ only allows to steer the unicycle. 

Following our method, we can pick $F_c=\begin{pmatrix}e_1 &e_2\end{pmatrix}$ so that $\bar F$ is the identity matrix. Hence according to Step 2 we have $G=\mbox{diag}(\lambda,\lambda,1)$. The corresponding Lagrangian~\eqref{def:L} is
\[
L(x,\dot x)=\lambda(\dot q_x-\cos(\theta))^2+\lambda(\dot q_y-\sin(\theta))^2+\dot\theta^2
\]
The boundary conditions are set to  $x_i=\begin{pmatrix}0&0&0\end{pmatrix}^\top$ and $x_f=\begin{pmatrix}0&1&0\end{pmatrix}^\top$, which are the boundary conditions of the so-called ``parallel parking'' problem. We obtain the results shown in Fig.~\ref{fig:Dubin}.

\begin{figure}
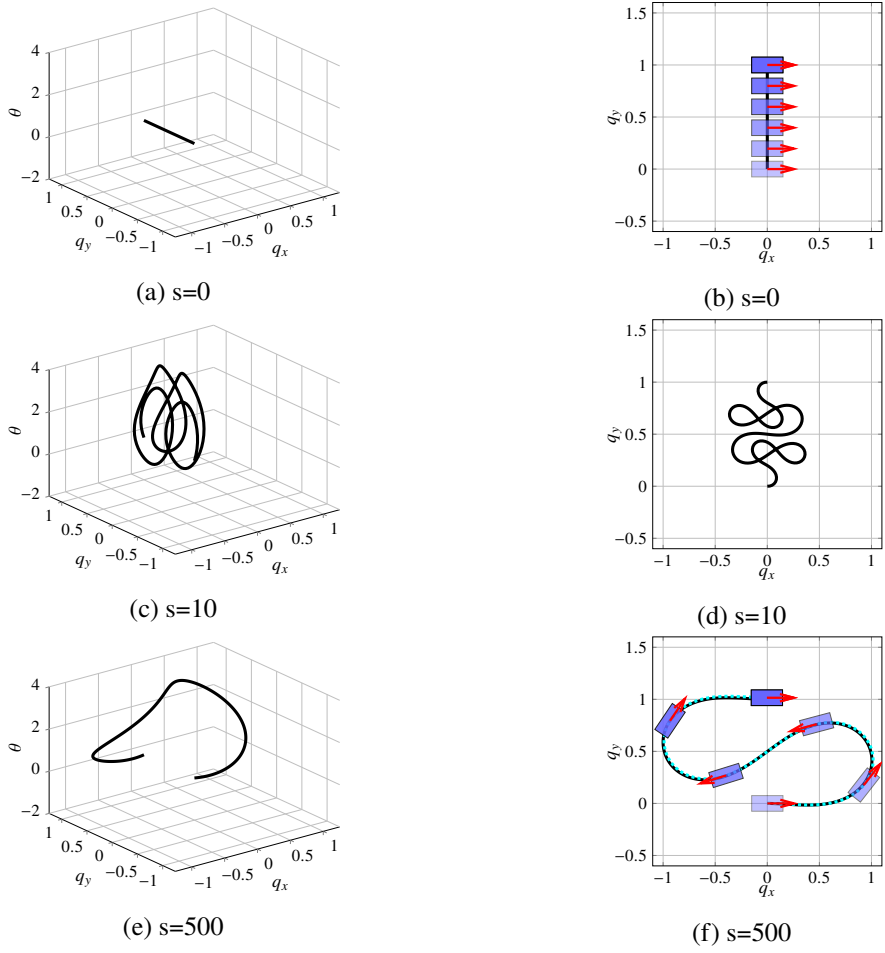

\tikzset{every picture/.style={scale=0.58}}
\begin{subfigure}{.5\columnwidth}
  \centering
  \input{figures/dubin_s0_3d.tex}
  \caption{s=0}
  \label{fig:dubin_s0_3d}
\end{subfigure}%
\begin{subfigure}{.5\columnwidth}
  \centering
  \input{figures/dubin_s0_2d.tex}
  \caption{s=0}
  \label{fig:dubin_s0_2d}
\end{subfigure}
\begin{subfigure}{.5\columnwidth}
  \centering
  \input{figures/dubin_s10_3d.tex}
  \caption{s=10}
  \label{fig:dubin_s10_3d}
\end{subfigure}%
\begin{subfigure}{.5\columnwidth}
  \centering
  \input{figures/dubin_s10_2d.tex}
  \caption{s=10}
  \label{fig:dubin_s10_2d}
\end{subfigure}
\begin{subfigure}{.5\columnwidth}
  \centering
  \input{figures/dubin_s500_3d.tex}
  \caption{s=500}
  \label{fig:dubin_s500_3d}
\end{subfigure}%
\begin{subfigure}{.5\columnwidth}
  \centering
  \input{figures/dubin_s500_2d.tex}
  \caption{s=500}
  \label{fig:dubin_s500_2d}
\end{subfigure}
\centering
\caption{\small Unicycle trajectory with $\lambda=1000, T=5$. Left column: paths in 3D state space; right column: corresponding $(q_x,q_y)$-plane projected view. The unicycle follows the black solid curve and moves from the position with the lightest blue color to positions with darker blue colors gradually, with its orientation and magnitude of linear velocity at each snapshot indicated by the red arrow.}
\label{fig:Dubin}
\end{figure}
We see that the initial sketch of straight line $x(t,0)=v(t)=(0,t,0)^\top$ in Fig.~\ref{fig:dubin_s0_3d} and Fig.~\ref{fig:dubin_s0_2d} cannot be followed by the unicycle as such a path violates the non-slip constraints and does not follow the drift dynamics. Fig.~\ref{fig:dubin_s10_3d} and Fig.~\ref{fig:dubin_s10_2d} show the curve $x(t,s)$ with $s=10$. Fig.~\ref{fig:dubin_s500_3d} and Fig.~\ref{fig:dubin_s500_2d} shows $x(t,s_{\max})$ with $s_{\max}=500$. The integrated trajectory (cyan dotted line) generated by using the extracted control (as in Step 4) is very close to $x(t,s_{\max})$ and drives the unicycle close to $x_f$ with very high precision.

\subsection[width=0.5\textwidth]{Dynamic unicycle}

We now consider the unicycle with inertia; the acceleration of the unicycle is proportional to the applied torque following Newton's second law. To model this system in the form of~\eqref{sys}, we add two states to the unicycle configuration: $u_1$ and $u_2$, representing the linear and angular velocity. The controls  $v_1,v_2$,  because they act on the accelerations $\dot u_1, \dot u_2$, can be thought of as torques.
\begin{equation}\label{sys:dynamic_unicycle}
\underbrace{\begin{pmatrix}
\dot{q}_x \\ \dot{q}_y \\ \dot{\theta} \\ {\dot{u}}_1 \\ {\dot{u}}_2
\end{pmatrix}}_{\dot{x}} = 
\underbrace{\begin{pmatrix}
u_1\cos{\theta} \\ u_1\sin{\theta} \\ u_2 \\ 0 \\ 0 
\end{pmatrix}}_{F_d} + 
\underbrace{\begin{pmatrix}
0 & 0 \\ 0 & 0 \\ 0 & 0 \\ 1 & 0 \\ 0 & 1
\end{pmatrix}}_{F}
\underbrace{\begin{pmatrix}
v_1 \\ v_2
\end{pmatrix}}_{v}
\end{equation}
Similar to the previous case, we can take $\bar F$ to be the  identity matrix and $G=\mbox{diag}(\lambda,\lambda,\lambda,1,1)$. Consequently,
\begin{equation}\label{L_dynamic_unicycle}
L(x,\dot x)=\lambda\left((\dot q_x-u_1\cos(\theta))^2+(\dot q_y-u_1\sin(\theta))^2\right.
\left.+(\dot\theta-u_2)^2\right)+\dot u_1^2+\dot u_2^2
\end{equation}
We set the boundary condition to $x_i=(0, 0, 0, 0, 0)^\top$ and $x_f=(0, -1, 0, 0, 0)^\top$. The boundary values for $u_1$ and $u_2$ are 0, meaning the unicycle starts and ends with 0 velocities. We use  a partial sinusoid $v(t)=(\sin(2\pi t),-t,0,0,0)$ as the initial sketch $x(t,0)$, shown in Fig.~\ref{fig:DynamicCar_s0_3d} and Fig.~\ref{fig:DynamicCar_s0_2d}. Following the remaining steps of the algorithm, the results are shown in Fig.~\ref{fig:DynamicCar}. 
The unicycle  cannot follow the initial curve as seen in Fig.~\ref{fig:DynamicCar_s0_2d}. The AGHF yields the curve $x(t,s_{\max})$ shown in Fig. ~\ref{fig:DynamicCar_s05_2d} (black solid line). Extracting the control, we obtain a trajectory  (cyan dotted line) that is almost identical. 
\begin{figure}
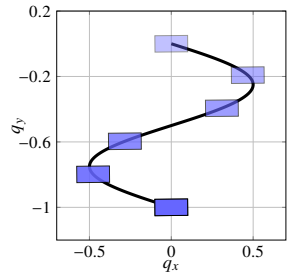
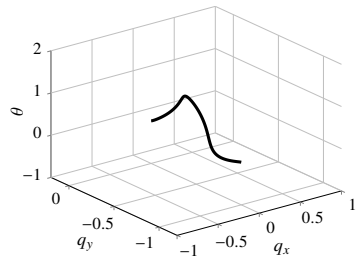
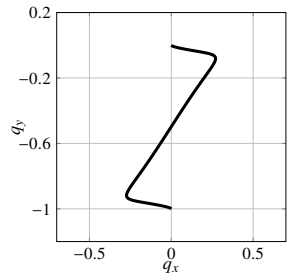
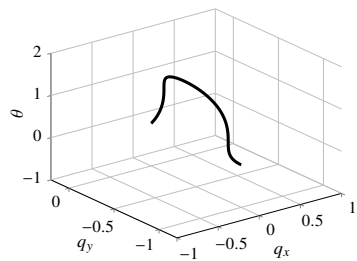
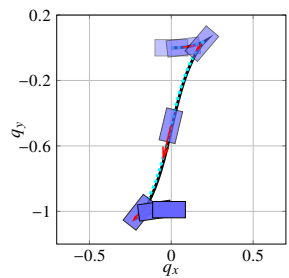

\tikzset{every picture/.style={scale=0.58}}
\begin{subfigure}{.5\columnwidth}
  \centering
  \input{figures/DynamicCar_s0_3d.tex}
  \caption{s=0}
  \label{fig:DynamicCar_s0_3d}
\end{subfigure}%
\begin{subfigure}{.5\columnwidth}
  \centering
  \input{figures/DynamicCar_s0_2d.tex}
  \caption{s=0}
  \label{fig:DynamicCar_s0_2d}
\end{subfigure}
\begin{subfigure}{.5\columnwidth}
  \centering
  \input{figures/DynamicCar_s0005_3d.tex}
  \caption{s=0.0005}
  \label{fig:DynamicCar_s0002_3d}
\end{subfigure}%
\begin{subfigure}{.5\columnwidth}
  \centering
  \input{figures/DynamicCar_s0005_2d.tex}
  \caption{s=0.0005}
  \label{fig:DynamicCar_s0002_2d}
\end{subfigure}
\begin{subfigure}{.5\columnwidth}
  \centering
  \input{figures/DynamicCar_s01_3d.tex}
  \caption{s=0.01}
  \label{fig:DynamicCar_s05_3d}
\end{subfigure}%
\begin{subfigure}{.5\columnwidth}
  \centering
  \input{figures/DynamicCar_s01_2d.tex}
  \caption{s=0.01}
  \label{fig:DynamicCar_s05_2d}
\end{subfigure}
\centering
\caption{\small Dynamic unicycle trajectory with $\lambda=50000, T=1$. Only $(q_x,q_y,\theta)$-space (left column) and $(q_x,q_y)$-projected view (right column) are shown here.} 
\label{fig:DynamicCar}
\end{figure}

\subsection{Unicycle with constrained inputs}
We now return to the original kinematics planar unicycle \eqref{eqn:unicycle}, and  we impose constraints on the input. Typical constraints include either the linear or steering velocity cannot be too large. Recall in Section~\ref{sec:constraints}, we have discussed that the first step for handling input constraints is to perform a dynamical extension and we will derive the equation \eqref{sys:dynamic_unicycle} again. We use the barrier function
\[
    b(x) = \frac{1}{(u_i^{\max})^2-{u_i}^2},\quad i=1\mbox{ or }2
\]
for the constraint in linear velocity or steering velocity, respectively. As a result, we have $L(x,\dot x)=b(x)L'(x,\dot x)$, where $L'$ is the old Lagrangian that we derived in \eqref{L_dynamic_unicycle}. It should be noted that while giving the boundary values and initial sketch of the states $u_1$ and $u_2$, they need to satisfy the constraints. The boundary values and initial sketch in the previous example with $u_1=u_2\equiv 0$ do always satisfy those requirements and we use them to solve \eqref{eq:aghf}. Thus we can follow the rest of the algorithm and derive an approximated path. The two cases of constrained linear velocity with $u_1^{\max}=2$ and constrained steering velocity with $u_2^{\max}=\frac{\pi}{2}$ individually are shown in Fig.~\ref{fig:DynamicCar_const}.
\begin{figure}
\tikzset{every picture/.style={scale=0.58}}
\begin{subfigure}{.5\columnwidth}
  \centering
  \input{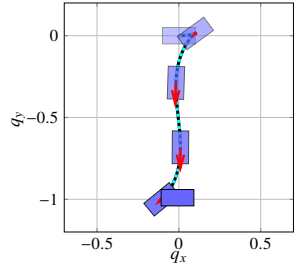}
  \caption{trajectory with constraint on linear velocity}
  \label{fig:DynamicCar_Z1const_2d}
\end{subfigure}%
\begin{subfigure}{.5\columnwidth}
  \centering
  \input{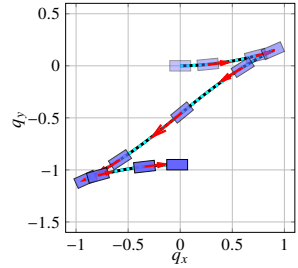}
  \caption{trajectory with constraint on steering velocity}
  \label{fig:DynamicCar_Z2const_2d}
\end{subfigure}
\begin{subfigure}{.5\columnwidth}
  \centering
  \input{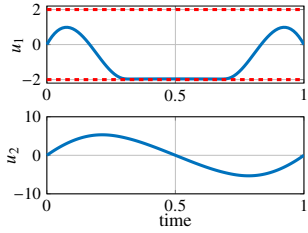}
  \caption{velocity profile with constraint on linear velocity}
  \label{fig:DynamicCar_Z1const_v}
\end{subfigure}%
\begin{subfigure}{.5\columnwidth}
  \centering
  \input{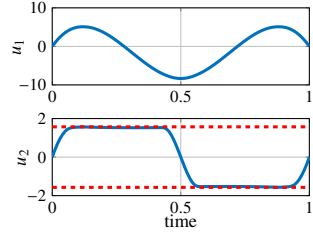}
  \caption{velocity profile with constraint on steering velocity}
  \label{fig:DynamicCar_Z2const_v}
\end{subfigure}
\centering
\caption{\small Dynamic unicycle trajectories with different constraints. When the linear velocity is constrained (left column), the integrated path tends to align with the straight line connecting $x_i,x_f$. More efforts are put in the steering in the beginning and ending of the trip in order to adjust the orientation fast as a compensation. When steering velocity is constrained, the integrated path is of distorted ``Z" shape. The peak value of the linear velocity is larger and the total length of the 2D path is much longer.}
\label{fig:DynamicCar_const}
\end{figure}

It is observed that for most $t\in[0,T]$, the constrained inputs almost meet their boundary values shown as the red dashed lines (however the constrained inputs will never reach their boundary values due to the soft barrier function $b(x)$); this is close to a bang-bang control strategy, which is very often the optimal control strategy for most control problems with bounded inputs. Nevertheless, with finite $\lambda$, our integrated path is always $C^1$ so the control is always continuous, thus we will never derive a true piece-wise continuous bang-bang control switching between boundary values. It remains an interesting yet challenging question that whether the limit of solutions when $\lambda$ goes to infinity is the ``true'' optimal path of minimal subRiemannian length.


\bibliographystyle{plain}

\bibliography{reference}

\end{document}